\newtheorem{theorem}{Theorem}[section]
\newtheorem{lemma}[theorem]{Lemma}
\theoremstyle{definition}
\newtheorem{definition}[theorem]{Definition}
\newtheorem{remark}{Remark}
\title[Constrained control for quasilinear parabolic PDEs] %Use the shortened version of the full title
      {Controllability under positive constraints for quasilinear parabolic PDEs}
\author[Miguel R. Nu{\~n}ez-Ch\'avez]{}
\subjclass{Primary: 35K59, 93C20; Secondary: 35K15.}
 \keywords{Quasilinear parabolic PDE, positive constraints, local controllability, stair-case method, positive minimal time.}
 \email{miguelnunez@opendeusto.es, miguelnunez@id.uff.br}
\thanks{$^*$ Corresponding author: Miguel R. Nu\~nez-Ch\'avez}
\begin{document}
\maketitle

% Enter the first author's name and address:
\centerline{\scshape Miguel R. Nu\~nez-Ch\'avez$^*$}
\medskip
{\footnotesize
% please put the address of the first author
 \centerline{DeustoTech, Fundaci\'on Deusto}
   %\centerline{Other lines}
   \centerline{ Av. Universidades, 24, 48007, Bilbao, Basque Country, Spain}
   \centerline{ Instituto de Matem\'atica e Estat\'istica, Universidade Federal Fluminense}
   \centerline{ R. Prof. Marcos Waldemar de Freitas, s/n, 24210-201, Niter\'oi, RJ, Brazil}
} % Do not forget to end the {\footnotesize by the sign }

%\medskip

%\centerline{\scshape First-name2 last-name2 and First-name3 last-name3}
%\medskip
%{\footnotesize
 % please put the address of the second  and third author
% \centerline{ First line of the address of the second author}
%   \centerline{Other lines}
 %  \centerline{Springfield, MO 65810, USA}
%}

\bigskip

% The name of the associate editor will be entered by an editorial staff
% "Communicated by the associate editor name" is not needed for special issue.
 \centerline{(Communicated by Luz de Teresa)}

%The abstract of your paper
\begin{abstract}
This paper deals with the analysis of the internal  controllability with constraint of positive kind of a quasilinear parabolic PDE.
	We prove two results about this PDE: First, we prove a global steady state constrained controllability result. For this purpose, we employ the called ``stair-case method''. And second, we prove a global trajectory constrained controllability result. For this purpose, we employ the well-known ``stabilization property'' in $L^2$ norms. Furthermore, for both results an important argument is needed: the exact local controllability to trajectories. Then we prove the positivity of the minimal controllability time using arguments of comparison principle.  Some additional comments and open problems concerning other systems are presented.
\end{abstract}

%The title of your section 1
\section{Introduction}

In this paper we focus on the controllability problem for quasilinear heat equations under constraints. Our aim is to analyse if the parabolic equation under consideration can be driven to a desired final target by means of the control action, but preserving some constraints on the control. We focus on nonnegativity constraints.

The class of linear, semilinear and quasilinear parabolic systems, in the absence of constraints, are controllable in any positive time (see \cite{ATF, Im, Mig, FZ, FI, ImYa, LEB, Lions, XU, Schm}). Sometimes, controls achieving the target at the final time are restrictions of solutions of the adjoint system. These controls experience large oscillations in the proximity of the final time. In particular, when the time horizon is too short, these oscillations prevent the control to fulfill the positivity constraint.

Now about controllability under constraints in parabolic systems, you have the first work in \cite{LOH} (2017), the authors dealt with a linear heat equation in dimension N, considering various types of boundary value problem, they also do several numerical simulations with interesting results, then in \cite{PZ} (2018) the authors worked with a semilinear equation with $C^1$ nonlinearity, without sign or globally Lipschitz assumptions on the nonlinear term. In \cite{POU} (2019) the authors worked with reaction-diffusion equations about the same question. In \cite{LEBAL} (2019) the author worked with the controllability of a $4 \times 4$ quadratic reaction-diffusion system.

In the present paper, inspired by \cite{LOH} and \cite{PZ}, we prove a more general result, this is, a problem with nonlinearity in the diffusion term. First, for steady states, the method of proof uses a ``stair-case argument", that consists in moving from one steady state to a neighbouring one, using small amplitude controls, in a recursive manner, so to reach the final target after a number of iterations and preserving the constraints on the control imposes a priori. This stair-case method, though, leads to constrained control results only when the time of control is large enough, and this controllability time increases when the distance between the initial and final steady states increases. Second, for the evolution case, we use an argument of stabilization in $L^2$ norms in a suitable time to guarantee a local controllability result and so conclude the controllability under positive constraints in the control.

Once the control property has been achieved, this is, we now have nonnegative controls, the classical comparison or maximum principle for parabolic equations concludes the proof of the existence of a positive minimal controllability time.

All previous techniques and results require the control time to be large enough. So, it is natural to analyse whether constrained controllability can be achieved in an arbitrary small time. In \cite{LOH} and \cite{PZ} (for the linear and semilinear heat equations respectively) it was shown  that constrained controllability does not hold when the time horizon is too short. Actually, for quasilinear equations we have the same result.

%The title of your section 2
\section{Statement of the main results}

Let $\Omega \subset \mathbb{R}^N (N \geq 1$ is an integer$)$ be a non-empty bounded connected open set, with regular boundary $\partial \Omega$. Let us fix $T>0$ and let us denote $Q:=\Omega \times (0,T)$ and $\Sigma :=\partial\Omega \times (0,T)$.

	Let\ $\omega, \omega_1 \subset \Omega$ be non-empty open sets, such that $\overline{\omega}_1 \subset \omega$. We deal with the exact controllability to trajectories for the quasilinear system
	\begin{equation}\label{system1}
		\left\{
			\begin{array}{lllll}
			\displaystyle	y_{t} - \nabla \cdot (a(y) \nabla y) = v \varrho_{\omega}  & \text{in} & Q, \\
				 y (x,t)=0             & \text{on} & \Sigma,\\
				  y(x,0)=y_{0}(x)   & \text{in} & \Omega,
			\end{array}
		\right.
	\end{equation}
where $y$ is the associated state, $v$ is the control   and $\varrho_\omega \in C^\infty_0(\overline{\Omega})$, such that $\varrho_\omega \geq 0$\ in $\Omega$,  $\varrho_\omega = 0$\  in $\Omega \backslash \omega$\ \ and\ \ $\varrho_\omega=1$ in $\omega_1$.

	Here, it will be assumed that the real-valued function $a=a(r)$ satisfies
	\begin{equation}\label{a_hypothesis}
	a \in C^{2}(\mathbb{R}),\ \ 0< a_0 \leq a(r) \ \ \text{and}\ \ |a'(r)| +|a''(r)| \leq M,\  \ \forall r \in \mathbb{R}.
	\end{equation}	
	
	We need to introduce some notation, for any $k, \mathit{l} \in \mathbb{N}$, denote by $C^{k,l}(\overline{Q})$ the set of all functions which have continuous derivatives up to order $k$ with respect to the space variable and up to order $l$ with respect to the time variable. For any $\theta \in (0,1)$, put
\begin{align*}
&C^{k+\theta,l+\frac{\theta}{2}}(\overline{Q}) \\
=& \left\{ z \in C^{k,l}(\overline{Q}); \underset{|\sigma|=k}{\sup}\ \underset{ (x_1,t_1) \neq (x_2,t_2)} {\sup} \frac{|\partial^{\sigma}_x \partial^{l}_t z(x_1,t_1)  - \partial^{\sigma}_x \partial^{l}_t z(x_2,t_2) |}{(|x_1-x_2|+|t_1-t_2|^{1/2})^{\theta}} < \infty \right\}
\end{align*}	
and
$$C^{k+\theta}(\overline{\Omega}) = \left\{z \in C^k(\overline{\Omega});  \underset{|\sigma|=k}{\sup}\ \ \ \underset{x_1 \neq x_2}{\sup} \frac{|\partial^{\sigma}_x z(x_1) - \partial^{\sigma}_x z(x_2) |}{|x_1-x_2|^{\theta}} < \infty  \right\},$$	
both of which are Banach spaces with canonical norms.

Note that, if $y_0 \in {C}^{2+1/2}(\overline{\Omega})$, $v \in C^{1/2,1/4}(\overline{Q})$,  then \ref{system1} possesses exactly one solution satisfying
	$$y \in C^{2+1/2,1+1/4}(\overline{Q}),$$
(see for instance~\cite{LADY}, Chapter 5, Section 6, Theorems 6.1 and 6.2). 	
	
We distinguish the following two results: steady state controllability and controllability to target trajectories.
	
\subsection{State state controllability}	
	
\begin{definition}\label{def-st}
	Let  $\overline{v} \in C^{1/2}(\overline{\Omega})$, a function  $\overline{y} \in C^{2+1/2}(\overline{\Omega})$  is said to be a steady state for (\ref{system1}) if it is a solution to
	\begin{equation}\label{steady-state}
-\nabla \cdot (a(\overline{y}) \nabla \overline{y})=\overline{v} \varrho_\omega \;\; \mbox{in}\;\; \Omega,\ \ \ \
\overline{y} =0 \;\; \mbox{in}\;\; \partial\Omega.
	\end{equation}
	The function  $\overline{v} \in C^{1/2}(\overline{\Omega})$ is called the steady control.
\end{definition}

The existence of steady states solution with non-homogeneous values can be analysed using for the nonlinear system the fixed point methods (see \cite{EZ}, Theorem 9.B) and for the linear system the classical existence results (see \cite{LADY2}, Chapter 4, Section 6, Theorem 6.4).

\begin{remark}
\label{obs1}
The application $\Lambda: \overline{v} \mapsto \overline{y}$ shown in \ref{steady-state} is continuous, since $a(\cdot)$ satisfies \ref{a_hypothesis}.

We will denote by\ \ $\mathcal{S}:= \Lambda(C^{1/2}(\overline{\Omega}))$ the set of all the steady-states with steady controls in $C^{1/2}(\overline{\Omega})$.
\end{remark}

\begin{definition}
Fixed $y_0, y_1 \in \mathcal{S}$ and fixed $\overline{v}^0, \overline{v}^1$ such that $\Lambda(\overline{v}^0)=y_0$ and $\Lambda(\overline{v}^1)=y_1$,  we define a  path-connected steady states that drive $y_0$ to $y_1$ as a  continuous path
\begin{align*}
&\gamma : [0,1]  \stackrel{\lambda}{\longrightarrow} C^{1/2}(\Omega) \stackrel{\Lambda}{\longrightarrow} \ \mathcal{S}\\
&\phantom{\gamma : [0}s\ \ \ \longmapsto\ \  \lambda(s)\ \  \longmapsto \gamma(s)=\Lambda(\lambda(s)) ,
\end{align*}
where $\lambda(s)$ is a continuous path of steady controls that drive $\overline{v}^0$ to $\overline{v}^1$ $(\lambda(0)=\overline{v}^0$ and $\lambda(1)=\overline{v}^1)$.

For each $s \in [0,1]$, we denote $\overline{y}^s:=\gamma(s)$ the steady state and $\overline{v}^s:=\lambda(s)$ the steady control of continuous path $\gamma$ .
\end{definition}

\begin{remark}
The existence of a path-connected steady states $\Lambda(s)$ depends of the existence of a continuous path of steady controls $\lambda(s)$ and this result is guaranteed because we have for instance the continuous path $\lambda(s):=(1-s)\overline{v}^0 + s \overline{v}^1$. Actually, there exist an infinite number of path-connected steady states that drive $y_0$ to $y_1$, since there exist an infinite number of continuous path of steady controls that drive $\overline{v}^0$ to $\overline{v}^1$.
\end{remark}

We introduce the first main result for steady states
\begin{theorem}\label{stst-control}
Let $y_0, y_1 \in \mathcal{S}$ fixed and let $\gamma(s):=\overline{y}^s$ be path-connected steady states that drive $y_0$ to $y_1$ with steady control $\overline{v}^s$. Let us assume there exists a constant $\eta>0$ such that
\begin{equation}\label{posit-constraint}
\overline{v}^s \geq \eta, \ \ \ \forall s \in [0,1].
\end{equation}
Then there exists $T_0 >0$ such that, for every $T \geq T_0$ there exists a control $v \in L^{\infty}({\Omega} \times (0,T))$ such that, the system $\ref{system1}$ admits a unique solution $y$ satisfying $y(\cdot, T)=y_1(\cdot)$ in $\Omega$ and $v \geq 0$ in ${\Omega} \times (0,T)$.
\end{theorem}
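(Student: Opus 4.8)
The plan is to implement the stair-case method announced in the introduction: I would build the control by concatenating finitely many short-time exact controls to trajectories along the path $\gamma$, and guarantee nonnegativity by playing the smallness of each local correction against the uniform lower bound $\eta$ on the steady controls.

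First I would invoke the local exact controllability to trajectories (established earlier in the paper), in a form that is \emph{uniform} along the compact path $\gamma([0,1]) \subset \mathcal{S}$. Concretely: there exist a control time $\tau > 0$, a radius $\delta > 0$ and a constant $C > 0$ such that, for each $s \in [0,1]$, whenever an initial datum $z_0 \in C^{2+1/2}(\overline{\Omega})$ satisfies $\|z_0 - \overline{y}^s\|_{C^{2+1/2}(\overline{\Omega})} \leq \delta$, there is a control $w$ steering the solution of \ref{system1} from $z_0$ to the (constant-in-time) steady state $\overline{y}^s$ in time $\tau$, with the estimate $\|w - \overline{v}^s\|_{L^\infty} \leq C \|z_0 - \overline{y}^s\|_{C^{2+1/2}(\overline{\Omega})}$. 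The uniformity of $\tau, \delta, C$ in $s$ follows because the trajectories $\overline{y}^s$ and controls $\overline{v}^s$ range over a compact set, so the underlying Carleman/fixed-point constants can be taken independent of $s$.

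Next, using the uniform continuity of $s \mapsto \overline{y}^s$ from $[0,1]$ into $C^{2+1/2}(\overline{\Omega})$, I would choose a partition $0 = s_0 < s_1 < \cdots < s_n = 1$ fine enough that $\|\overline{y}^{s_{i+1}} - \overline{y}^{s_i}\|_{C^{2+1/2}(\overline{\Omega})} \leq \delta$ for every $i$. Then I would construct the trajectory step by step on the windows $[i\tau, (i+1)\tau]$: starting from $\overline{y}^{s_0} = y_0$, apply the local controllability around the steady state $\overline{y}^{s_1}$ to reach $\overline{y}^{s_1}$ at time $\tau$; from $\overline{y}^{s_1}$ reach $\overline{y}^{s_2}$ at time $2\tau$; and so on, reaching $y_1 = \overline{y}^{s_n}$ at time $T_0 := n\tau$. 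Concatenating the pieces yields a global control $v \in L^\infty(\Omega \times (0,T_0))$, and the concatenated state is a genuine solution because it matches the steady state $\overline{y}^{s_i}$ at each junction $i\tau$.

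The decisive point, and the main obstacle, is the nonnegativity of $v$. On the $i$-th window the control $w_i$ is a local correction of $\overline{v}^{s_{i+1}}$, and the estimate above gives $\|w_i - \overline{v}^{s_{i+1}}\|_{L^\infty} \leq C \|\overline{y}^{s_i} - \overline{y}^{s_{i+1}}\|_{C^{2+1/2}(\overline{\Omega})} \leq C\delta$. Since $\overline{v}^s \geq \eta > 0$ uniformly by \ref{posit-constraint}, shrinking the partition further so that $C\delta < \eta$ yields $w_i \geq \eta - C\delta > 0$, hence $v \geq 0$ throughout $\Omega \times (0,T_0)$; this is precisely where the staircase must be refined, forcing $n$ (and thus $T_0$) to grow with the length of the path. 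Finally, to cover every $T \geq T_0$ I would append, on $[T_0, T]$, the constant-in-time state $y_1$ driven by its own steady control $\overline{v}^1 = \overline{v}^{s_n} \geq \eta > 0$; this keeps the solution at $y_1$, preserves $v \geq 0$, and gives $y(\cdot, T) = y_1$ as required. I expect the heavy lifting to lie in securing the uniform-in-$s$ local controllability together with a control-size estimate sharp enough to beat $\eta$; once that estimate is in hand, the compactness, concatenation, and waiting steps are routine.
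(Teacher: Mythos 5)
Your proposal is correct and follows essentially the same route as the paper: invoke the local exact controllability to trajectories with constants depending only on a radius $R$ taken as the supremum of $\|\gamma(s)\|_{C^{2+1/2}(\overline{\Omega})}$ over the compact path (the paper fixes the step time $\tau=1$), refine the partition until each local correction is bounded by $\eta$ so that $v_k \geq \overline{v}_k - \eta \geq 0$, and concatenate. Your explicit waiting step on $[T_0,T]$ with the steady control $\overline{v}^1 \geq \eta$ is left implicit in the paper but is the right way to cover all $T \geq T_0$.
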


\subsection{Controllability to target trajectories}

Now, let us define  a target trajectory  $\overline{y}=\overline{y}(x,t)$ as solution to
\begin{equation}\label{trajectory1}
	\left\{
		\begin{array}{lllll}
			\displaystyle	\overline{y}_{t}- \nabla \cdot (a (\overline{y}) \nabla \overline{y}) = \overline{v} \varrho_\omega  & \mbox{in} & Q,\\
			\overline{y} (x,t)=0   & \mbox{on} & \Sigma,\\
			\overline{y}(x,0)= \overline{y}_{0}(x)   & \mbox{in} & \Omega,
		\end{array}
	\right.
\end{equation}
with\ \ $\overline{y}_{0}\in C^{2+1/2}(\overline{\Omega})$ and\ \ $\overline{v} \in C^{1/2,1/4}(\overline{Q})$ such that
\begin{equation}\label{traject1-condition}
 M_a \|\nabla \overline{y}\|_{L^{\infty}(\Omega \times (0,T))} \leq \frac{a_0}{2\ C(\Omega)},
\end{equation}
where $C(\Omega)$ is the Poincar\'e inequality constant, so $\|u\|_{L^2(\Omega)} \leq C(\Omega) \|\nabla u\|_{L^2(\Omega)}$ and the constant $M_a$ is defined by $M_a:=\underset{r \in \mathbb{R}}{\text{sup}}\ |a'(r)|$.

\begin{remark}\label{rem2}
It is clear that we implicitly define the target trajectory $\overline{y}$ that satisfies \ref{trajectory1} and \ref{traject1-condition} in relation to the initial conditions $\overline{y}_0$ and $\overline{v}$ and the function $a(\cdot)$.
We show some observations
\begin{itemize}
	\item[i)] The condition \ref{traject1-condition} must be valid for any $T>0$, in other words, the estimate on the gradient of the target trajectory must be independent of the time variable.
	\item[ii)] If the systems \ref{system1} and \ref{trajectory1} were linear, thus, if the function $a(\cdot)=\text{constant}$, then the condition~\ref{traject1-condition} would be satisfied because in this case we have $M_a = 0$.
	\item[iii)] If the target trajectory was stationary, the condition \ref{traject1-condition} would be satisfied, but then we are in a particular case of Theorem \ref{stst-control}.
	\item[iv)] The proof of the existence of such target trajectory is not obvious. Indeed, if we think in the simple case $(\nabla \overline{y} =0)$ we would have a contradiction. If consider a solution $\overline{y}$ as a function of separable variables, it is difficult to guarantee the existence of such solution.
\end{itemize}
\end{remark}

Due to Remark \ref{rem2}, item iv), we formulate the following second main result for target trajectories:
\begin{theorem}\label{traject-control}
Suppose there exists a target trajectory $\overline{y}$ satisfying the condition \ref{traject1-condition} with initial datum $\overline{y}_0$ and control $\overline{v}$. Let us assume there exist a constant $\eta>0$ such that
\begin{equation}\label{posit-condit}
\overline{v} \geq \eta \ \ \ \text{in}\ \  {\Omega} \times \mathbb{R}^+.
\end{equation}
For any $y_0 \in C^{2+1/2}(\overline{\Omega})$ initial datum, there exists $T_0 >0$ such that for every $T \geq T_0$, we can find a control $v \in L^{\infty}({\Omega} \times (0,T))$ such that the unique solution $y$ to \ref{system1} satisfies \ $y(T)=\overline{y}(T)$ in $\Omega$\ \ and \ $v \geq 0$\ \ in\ \ ${\Omega} \times (0,T)$.

Furthermore, if $y_0 \neq \overline{y}_0$ then the minimal controllability time $T_{\mathrm{min}}$ is strictly positive, where
\begin{equation}\label{mintime}
T_{\mathrm{min}} :=\ \inf \ \{ T>0; \exists\  v \in L^\infty(\Omega \times (0,T))^+,\ \text{such that}\ \ y(T) = \overline{y}(T)\ \text{in}\ \Omega \}.
\end{equation}
\end{theorem}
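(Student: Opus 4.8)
The plan is to treat the two assertions separately: the constrained controllability for $T$ large, and the strict positivity of $T_{\mathrm{min}}$. For the first I would use a three-phase \emph{stabilize--regularize--locally control} scheme, taking the exact local controllability to trajectories (the key ingredient announced in the abstract) as a black box, and using an $L^2$-dissipation estimate to shrink the initial discrepancy $y_0-\overline{y}_0$ until it can be corrected by an admissible, i.e. nonnegative, control. \emph{Phase 1 (stabilization).} On a first interval $(0,T_1)$ I would inject the target control, $v=\overline{v}$, so that $w:=y-\overline{y}$ solves the homogeneous quasilinear problem $w_t-\nabla\cdot(a(y)\nabla y-a(\overline{y})\nabla\overline{y})=0$ with $w=0$ on $\Sigma$ and $w(0)=y_0-\overline{y}_0$. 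Multiplying by $w$, integrating over $\Omega$, and splitting $a(y)\nabla y-a(\overline{y})\nabla\overline{y}=a(y)\nabla w+(a(y)-a(\overline{y}))\nabla\overline{y}$, the principal term is $\geq a_0\|\nabla w\|_{L^2}^2$, while the cross term is bounded, via $|a(y)-a(\overline{y})|\leq M_a|w|$, the smallness bound \ref{traject1-condition} and Poincar\'e's inequality, by $\tfrac{a_0}{2}\|\nabla w\|_{L^2}^2$. One is left with
\[
\frac{d}{dt}\|w\|_{L^2}^2+\frac{a_0}{C(\Omega)^2}\|w\|_{L^2}^2\leq 0,
\qquad\text{hence}\qquad
\|w(T_1)\|_{L^2}\leq e^{-a_0 T_1/(2C(\Omega)^2)}\|y_0-\overline{y}_0\|_{L^2}.
\]
The decisive point is that \ref{traject1-condition} is tailor-made to render the quasilinear cross term absorbable, so the dissipation rate is uniform in time.

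\emph{Phases 2--3 (regularization and local control).} Still with $v=\overline{v}$, parabolic smoothing on a short interval upgrades the $L^2$-smallness of $w$ to smallness in the $C^{2+1/2}(\overline{\Omega})$-type norm in which the local controllability result is formulated. Then, on the last interval, I apply the exact local controllability to trajectories: once $\|y-\overline{y}\|$ at the start of this interval is below the local threshold, there is a control $v$ driving $y$ exactly to $\overline{y}(T)$ with $\|v-\overline{v}\|_{L^\infty}$ arbitrarily small. Choosing $T_1$ (equivalently $T_0$) large enough that this correction is $<\eta$, the constraint is preserved throughout: on $(0,T_1)$ one has $v=\overline{v}\geq\eta>0$, while on the final interval $v\geq\overline{v}-\|v-\overline{v}\|_{L^\infty}\geq\eta-\eta=0$. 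Concatenating the three controls yields an admissible control; since enlarging $T$ beyond $T_0$ merely lengthens Phase~1, which keeps $w$ small, the conclusion holds for every $T\geq T_0$.

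\emph{Positivity of $T_{\mathrm{min}}$.} Here I would argue by comparison. Let $\hat{y}$ solve \ref{system1} with $v\equiv0$ and datum $y_0$. Since any admissible control gives $v\varrho_\omega\geq0$, the comparison principle for the quasilinear operator (valid under \ref{a_hypothesis}, as $a\geq a_0>0$ and the difference equation linearizes to a uniformly parabolic one) yields $y\geq\hat{y}$ in $Q$, so $\overline{y}(\cdot,T)=y(\cdot,T)\geq\hat{y}(\cdot,T)$. Because $\hat{y}(\cdot,T)\to y_0$ and $\overline{y}(\cdot,T)\to\overline{y}_0$ uniformly as $T\to0^+$, exact control along a sequence $T_n\to0$ would force $\overline{y}_0\geq y_0$ on $\Omega$; thus whenever $y_0>\overline{y}_0$ somewhere, reachability fails for small $T$ and $T_{\mathrm{min}}>0$. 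The complementary configuration $y_0\leq\overline{y}_0$, $y_0\neq\overline{y}_0$, I would settle by a localized maximum-principle estimate: away from $\overline{\omega}$ the source vanishes, so the control's influence on a point at positive distance from $\omega$ is exponentially small in $1/T$, which forces $y(\cdot,T)<\overline{y}(\cdot,T)$ there for $T$ small, again contradicting exact reachability and giving $T_{\mathrm{min}}>0$ as in \ref{mintime}.

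The main obstacle I anticipate is not the dissipation estimate but the junction between Phases~1--2 and the local result: one must pin down the topology in which local controllability is stated and check that parabolic regularization transports the $L^2$-smallness into that topology with constants independent of the (large) stabilization time $T_1$. For the minimal-time claim, the delicate case is $y_0\leq\overline{y}_0$, where the one-sided comparison is inconclusive and a genuinely quantitative, localized maximum-principle bound is required.
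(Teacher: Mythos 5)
Your plan for the first assertion coincides with the paper's proof: control with $v=\overline{v}$ on $(0,T-\tau)$, use \ref{traject1-condition} and Poincar\'e to absorb the quasilinear cross term and get exponential $L^2$ decay of $y-\overline{y}$, then apply local exact controllability to trajectories on a final window of fixed length $\tau$ with $\|\tilde v-\overline{v}\|\leq\eta$, so that $\tilde v\geq\overline{v}-\eta\geq 0$, and concatenate. Your explicit regularization phase bridging the $L^2$ smallness to the $C^{2+1/2}$ topology of the local lemma is a point the paper glosses over, and it is right to insist on it. Likewise, your case $y_0>\overline{y}_0$ on a set of positive measure is the paper's Case 1: comparison with the free solution $z$ plus continuity at $t=0$.

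The genuine gap is the complementary case $y_0\leq\overline{y}_0$, $y_0\neq\overline{y}_0$. The ``localized maximum-principle estimate'' you propose --- that the influence of the control at a point at positive distance $d$ from $\omega$ is exponentially small in $1/T$ --- is false as stated, because the admissible set in \ref{mintime} carries no a priori bound on $\|v\|_{L^\infty}$. The heat-kernel factor $e^{-d^2/(4T)}$ is super-polynomially small, but it multiplies the size of the source: a nonnegative control of size, say, $e^{1/T^2}$ on $\omega\times(0,T)$ raises $y$ above the free solution by an arbitrarily large amount at \emph{any} fixed point of $\Omega$ within time $T$ (infinite speed of propagation). So the pointwise obstruction you invoke does not exist, and no quantitative refinement of it can close this case. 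The paper instead uses a duality argument that is insensitive to the size of $v$: setting $\xi=y-z$ (so $\xi(0)=0$, source $v\varrho_\omega$) and $\overline{\xi}=\overline{y}-z$ (initial datum $\overline{y}_0-y_0\geq 0$), it chooses the adjoint final datum $\varphi^T=\zeta\phi_1$, equal to $-\phi_1$ away from $\omega$ and to a small positive multiple $C_\theta\phi_1$ on $\omega$, so that $\int_\Omega(\overline{y}_0-y_0)\varphi^T\,dx\leq-\theta/3<0$ and hence $\langle\overline{\xi}(T),\varphi^T\rangle<0$ for $T$ small, while the duality identity gives
\begin{equation*}
\langle\xi(T),\varphi^T\rangle=\iint_{\omega\times(0,T)}v\,\varrho_\omega\,\varphi\,dx\,dt\geq 0
\end{equation*}
for \emph{every} nonnegative $v$, since $\varphi\geq 0$ on $\omega\times(0,T_0)$ for $T_0$ small. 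The obstruction is thus a sign condition on a fixed test functional, not a smallness-of-influence bound; your Case 2 needs to be replaced by an argument of this kind.
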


\begin{remark}
Notice that considering a steady state as a target trajectory, we can think that Theorem~\ref{traject-control} implies Theorem \ref{stst-control}, but this is not the case, since we do not need condition \ref{traject1-condition} for the steady state $($see Remark \ref{rem2}, item iii)).
\end{remark}

The paper is organized as follows.

Section 3 is devoted to prove Theorem \ref{stst-control} using the local controllability result and stair-case method. In~Section 4, we prove the first part of Theorem~\ref{traject-control} using the dissipative property and the local controllability result. In Section 5, we prove the last part of Theorem \ref{traject-control}, this is, we prove the positivity of the minimal controllability time using methods as comparison principle. Section 6 deals with some additional comments and open questions. In Appendix, we will prove the local exact controllability to trajectories for system \ref{system1}.

\section{Proof of Theorem \ref{stst-control}}\label{Sec2}

In this Section, we deal with steady states, for this purpose we use two important results:
\begin{itemize}
	\item[a)] Local exact controllability to trajectories with controls $C^{1/2,1/4}$ (see Appendix).
	\item[b)] The stair-case method to obtain the desired global control (see Figure \ref{Fig1}).
\end{itemize}

\begin{figure}[htp]
\begin{center}
\includegraphics[width=3in]{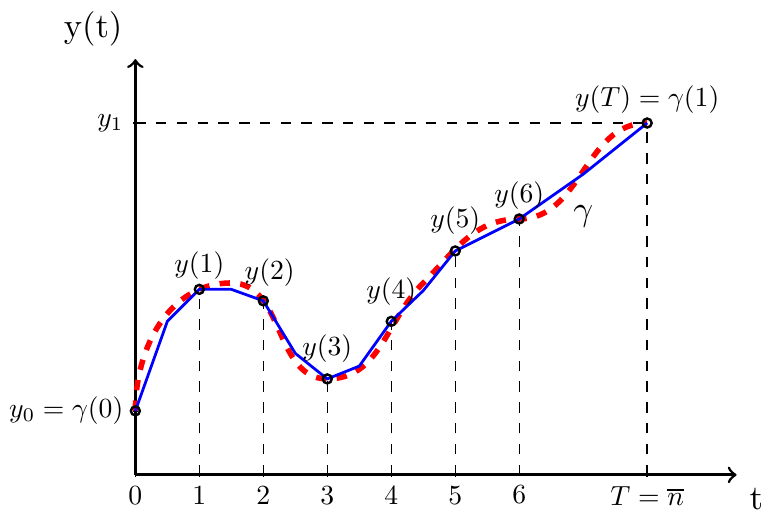}\\
\caption{The stair-case method. {Trajectory of state-control} in blue. {Path of steady states} in red.}\label{Fig1}
\end{center}

\end{figure}

Let $y_0, y_1 \in \mathcal{S}$ fixed and let $\gamma(s):=\overline{y}^s$ be path connected steady states that drive $y_0$ and $y_1$ with steady control $\overline{v}^s$.

\vspace*{4pt}\noindent\textbf{Step 1.} Consequences of Local Controllability. If we suppose that $\|y_1 - y_0\|_{C^{2+1/2}(\overline{\Omega})}$ is small enough, then:

Taking $T=1$, $R >0$ fixed and for any $\epsilon>0$, applying Lemma \ref{lem1} (see Appendix) there exist positive constants $C(R)$ and $\delta_{\epsilon}(R)$ such that if
\begin{equation}
\label{small-init-cond}
\|y_0\|_{C^{2+1/2}(\overline{\Omega})} \leq R, \ \ \|y_1\|_{C^{2+1/2}(\overline{\Omega})} \leq R,\ \ \|y_1 - y_0\|_{C^{2+1/2}(\overline{\Omega})} \leq \delta_\epsilon,
\end{equation}
then, there exists a state-control $(y,v)$ such that
\begin{equation}\label{regu-linear}
v \in C^{1/2,1/4}(\overline{\Omega} \times [0,1]),
\end{equation}
with $y(x,1)=y_{1}(x)$ in  $\Omega$\ \ and we have the following estimate for the control
\begin{equation}\label{estimative-linear}
\|v - \overline{v}^1\|_{C^{1/2,1/4}(\overline{\Omega} \times [0,1]))} \leq C(R) \|y_1 - y_0\|_{C^{2+1/2}(\overline{\Omega})},
\end{equation}
where $\overline{v}^1:=\overline{v}^s|_{s=1}$.

Taking $\delta_{\epsilon}>0$ small enough $($for instance\ \ $\delta_{\epsilon} \leq \dfrac{\epsilon}{C(R)})$, we have
\begin{equation}
\label{estima-control}
\|v - \overline{v}^1\|_{C^{1/2,1/4}(\overline{\Omega} \times [0,1])} \leq \epsilon.
\end{equation}

\vspace*{4pt}\noindent\textbf{Step 2.} Stair-case Method.
If $\|y_1 - y_0\|_{C^{2+1/2}(\overline{\Omega})}$ is not necessarily small, we can work of the following way:

For $\overline{n} \in \mathbb{N}$ let us divide the interval $[0,1]$ in $\overline{n}$ equal parts, and denote
$$\overline{y}_k:=\gamma\left(\frac{k}{\overline{n}}\right),\ k=0,1,2,\dots,\overline{n}-1,\overline{n},$$
where $\overline{y}_{0} = \gamma(0) = y_0$ and $\overline{y}_{\overline{n}}=\gamma(1)=y_1$.

It is clear that $(\overline{y}_k)_{1 \leq k \leq \overline{n}}$ is a finite sequence of steady states and let us denote as $\overline{v}_k:=\overline{v}^{\frac{k}{n}}$ the steady control of $\overline{y}_k$, then by hypothesis \ref{posit-constraint}, we have $\overline{v}_k \geq \eta >0$.

Due to Step 1, it is sufficient to check condition \ref{small-init-cond} to $\overline{y}_{k}$, indeed,  taking %\linebreak
$R=\underset{s \in [0,1]}{\text{sup}} \|\gamma(s)\|_{C^{2+1/2}(\overline{\Omega})},$ it is obvious that
$$\|\overline{y}_k\|_{C^{2+1/2}(\overline{\Omega})} \leq R, \ \ \  k=0,1,2,\dots,\overline{n}-1,\overline{n}.$$
Taking $\epsilon=\eta$, choosing $\overline{n}$ large enough and using the continuity of path-connected $\gamma(\cdot)$, we get
$$\|\overline{y}_k-\overline{y}_{k-1}\|_{C^{2+1/2}(\overline{\Omega})} \leq \delta_{\eta},$$
where $\delta_{\eta}>0$ is small enough.

Then, for any $1 \leq k \leq \overline{n}$, we can find controls $v_k \in C^{1/2,1/4}(\overline{\Omega} \times~[0,1])$ joining the steady states $\overline{y}_{k-1}$ and $\overline{y}_k$, such that
$$\|v_k-\overline{v}_k\|_{C^{1/2,1/4}(\overline{\Omega} \times [0,1])} \leq \eta.$$
Since $v_k=v_k-\overline{v}_k+\overline{v}_k$, we have
\begin{equation}
\label{positive-aprox-control}
v_k \geq -|v_k-\overline{v}_k| + \overline{v}_k \geq -\|v_k-\overline{v}_k\|_{C^{1/2,1/4}(\overline{\Omega} \times [0,1])} + \overline{v}_k \geq -\eta + \eta =0,\ \ \text{a.e. in}\ \ {\Omega} \times (0,1).
\end{equation}

\vspace*{4pt}\noindent\textbf{Step 3.} Construction of the global control.
For $T = \overline{n}$, we have defined the sequence of connected-paths that runs through the following points:
$$y_0=y(0)\longrightarrow y(1) \longrightarrow \cdots \longrightarrow y(\overline{n}-1) \longrightarrow y(\overline{n})=y_1.$$
For this reason, we define $v: (0, T) \to L^{\infty}({\Omega})$ as
			$$v(t):= v_k(t-(k-1)),\  \ t \in (k-1,k)\ \ \text{for}\ \ k=1, 2, \dots, \overline{n}-1, \overline{n}.$$  	
Thus, we obtain that $v \in L^{\infty}(\Omega \times (0,T))$ is a desired control.

\begin{remark}
Notice that by construction the time employed to control the steady-state is the number of steps we did. Furthermore, due to the particularity of the construction, we can not affirm or deny anything about what happens with the control in a short time (this question will be considered in Section 5).
\end{remark}

%%%%%%%%%%%%%%%
%%% NEW SECTION
%%%%%%%%%%%%%%%

\section{Proof of Theorem \ref{traject-control}} \label{Sec3}

In this section, we will prove the controllability to target trajectories, for this purpose we use two important results (see Figure \ref{Fig2}):
\begin{itemize}
\item[a)] Stabilization property.
\item[b)] Local exact controllability to trajectories with control $C^{1/2,1/4}$ (see Appendix).
\end{itemize}

\begin{figure}[htp]
	\begin{center}
  % Requires \usepackage{graphicx}
  % replace aims_logo.eps by your figure file name
  \includegraphics[width=3in]{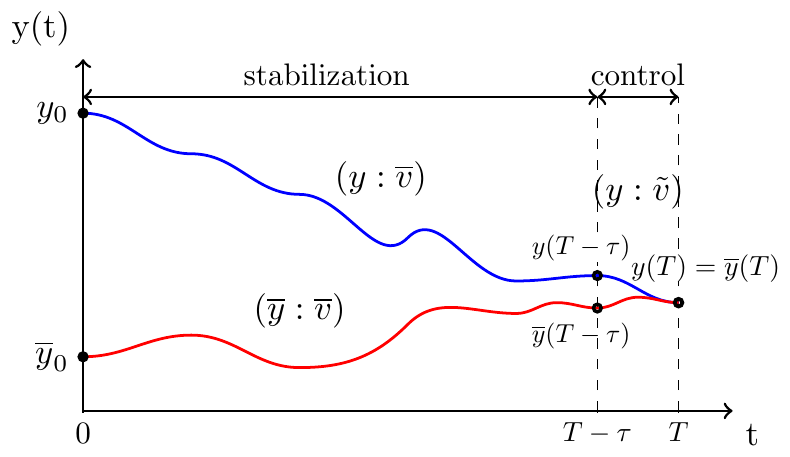}\\
	\caption{Trajectory of state-control in blue. Target trajectory in red.}\label{Fig2}
	\end{center}

\end{figure}

Let $\overline{y}$ be a fixed target trajectory solution to \ref{trajectory1} with control $\overline{v} \in C^{1/2,1/4}(\overline{Q})$ and initial datum $\overline{y}_0 \in C^{2+1/2}(\overline{\Omega})$.

\vspace*{4pt}\noindent\textbf{Step 1.} Stabilization of the system $\bf{(y-\overline{y})(x,t)}$. Let $\tau>0$ be fixed and $T>\tau$ be large enough. In the time interval $[0,T-\tau]$ we control $y$ by means $v=\overline{v}$.

We have the stabilization property of $L^2$ in $[0,T-\tau]$, this is
	\begin{equation}\label{stabpropL2}
		\|y(t)-\overline{y}(t)\|_{L^2(\Omega)} \leq e^{- \lambda t} \|y_0 - \overline{y}_0\|_{L^2(\Omega)},\ \ \forall t \in [0,T-\tau],
	\end{equation}
	for some $\lambda>0$ that does not depend on $T$.
	
	Indeed, by subtracting \ref{trajectory1} to \ref{system1} and multiplying by $(y-\overline{y})$ we have
\begin{align*}
	\int_\Omega (y-\overline{y})_t (y-\overline{y}) dx &- \int_\Omega \nabla \cdot (a(y) \nabla(y-\overline{y})) (y-\overline{y}) dx\\
	& - \int_\Omega \nabla \cdot ((a(y)-a(\overline{y})) \nabla \overline{y}) (y-\overline{y}) dx = 0.
	\end{align*}
	Then
	\begin{align*}
	&\frac{1}{2}  \frac{d}{dt} \left( \|y-\overline{y}\|_{L^2 (\Omega)}^2 \right)  + \int_\Omega a(y) |\nabla (y-\overline{y})|^2 dx \\
	+& \int_\Omega (a(y)-a(\overline{y})) \nabla \overline{y} \cdot \nabla(y-\overline{y}) dx =0.
\end{align*}
Furthermore, using hypothesis about $a(\cdot)$ in \ref{a_hypothesis}, we have
\begin{align*}
	&\left| \int_\Omega (a(y)-a(\overline{y})) \nabla \overline{y} \cdot \nabla(y-\overline{y}) dx \right|\\
	\leq& M_a \|\nabla \overline{y}\|_{L^\infty(\Omega \times (0,T-\tau))}  \int_\Omega |y-\overline{y}| |\nabla (y-\overline{y})| dx\\
	\leq& M_a \|\nabla \overline{y}\|_{L^\infty(\Omega \times (0,T-\tau))} \|y-\overline{y}\|_{L^2(\Omega)} \|\nabla (y-\overline{y})\|_{L^2(\Omega)}\\
	\leq& M_a C(\Omega)\|\nabla \overline{y}\|_{L^\infty(\Omega \times (0,T-\tau))} \|\nabla (y-\overline{y})\|_{L^2(\Omega)}^2,
\end{align*}
where the constants $C(\Omega)$ and $M_a$ are defined in \ref{traject1-condition}.

Finally, joining the previous results and thanks to condition \ref{traject1-condition},  we get
\begin{align*}
	\frac{1}{2}\frac{d}{dt} \left( \|y-\overline{y}\|_{L^2(\Omega)}^2 \right) + a_0 \|\nabla (y-\overline{y})\|_{L^2(\Omega)}^2 \leq \frac{a_0}{2} \|\nabla (y-\overline{y})\|_{L^2(\Omega)}^2,
\end{align*}
then	
\begin{align*}
	\frac{d}{dt} \left( \|y-\overline{y}\|_{L^2(\Omega)}^2 \right) + \frac{a_0}{C(\Omega)} \|y-\overline{y}\|_{L^2(\Omega)}^2 \leq 0,
\end{align*}
where the constant $C(\Omega)$ is defined in \ref{traject1-condition}.

Integrating in time from $0$ to $t$, with $t \in [0,T-\tau]$, we prove \ref{stabpropL2}.

Now, using the stabilization property of $L^2$ in \ref{stabpropL2} with $t=T-\tau$, we finally get
\begin{equation}\label{stab1}
\|y(T-\tau)-\overline{y}(T-\tau)\|_{L^2(\Omega)} \leq  e^{-\lambda(T-\tau)} \|y_0 - \overline{y}_0\|_{L^2(\Omega)}.
\end{equation}

\vspace*{4pt}\noindent\textbf{Step 2.} Local control for the system $\bf{y}(x,t)$. Now, we construct the local control in final time $t=T$.

We can consider ${y}(\cdot,T-\tau)$ as the new initial datum and $\overline{y}|_{\Omega \times (T-\tau,T)}$ as the new target trajectory.

Notice that, for $\eta>0$ (see condition \ref{posit-constraint}) and fixed $T>0$ large enough, we have that $$\|y(T-\tau)-\overline{y}(T-\tau)\|_{L^2(\Omega)} < \delta_{\eta},$$
where $\delta_{\eta}>0$ is small enough.

Taking $R=\|\overline{y}\|_{C^{2+1/2,1+1/4}(\overline{\Omega} \times [T-\tau,T])} +1$, we guaranteed
the hypotheses of Lemma \ref{lem1} (see Appendix), then  there exists a control $\tilde{v} \in C^{1/2,1/4}(\overline{\Omega} \times [T-\tau,T])$, such that $y(T)=\overline{y}(T)$ in $\Omega$.

Furthermore, there exists a positive constant $C(\tau)$ that depend on $\tau$ and does not depend on $T$ such that
$$\|\tilde{v}-\overline{v}\|_{C^{1/2,1/4}(\overline{\Omega}\times [T-\tau,T] )} \leq C(\tau) \|y(T-\tau)-\overline{y}(T-\tau)\|_{L^2(\Omega)} \leq C(\tau) \delta_{\eta}.$$

As $\delta_\eta>0$ is small enough, we get\ \ $\|\tilde{v}-\overline{v}\|_{C^{1/2,1/4}} \leq \eta$.

We split $\tilde{v}=(\tilde{v}-\overline{v})+\overline{v}$ and we conclude
\begin{align*}
\tilde{v}  \geq -|\tilde{v} - \overline{v}| + \overline{v} \geq -\|\tilde{v}-\overline{v}\|_{C^{1/2,1/4}} + \overline{v} \geq -\eta + \eta=0,\ \ \text{in}\ \Omega \times (T-\tau,T).
\end{align*}

\vspace*{4pt}\noindent\textbf{Step 3.} Construction of the global control.
Finally, it is natural to define a required control as
$$v := \begin{cases}
			\overline{v}  &\text{in}\ \ (0,T-\tau),\\
			\tilde{v}	  &\text{in}\ \ (T-\tau,T),
		\end{cases}$$
and this concludes the proof.

%%%%%%%%%%%%%%%
%%% NEW SECTION
%%%%%%%%%%%%%%%

\section{Positivity of the minimal controllability time}\label{Sec4}
Let us consider the state-control $(y,v)$ solution to \ref{system1} with initial datum $y_0 \in C^{2+1/2}(\overline{\Omega})$; and let us consider the target  trajectory $\overline{y}$ solution to \ref{trajectory1} with control $\overline{v} \in C^{2+1/2,1/4}(\overline{Q})$, such that\ $\overline{v}\geq \eta>0$ (as in \ref{posit-condit}) and initial datum $\overline{y}_0 \in C^{2+1/2}(\overline{\Omega})$.

Recall the definition in \ref{mintime}:
\begin{equation*}
T_{\mathrm{min}} :=\ \inf\ \{ T>0; \exists\  v \in L^\infty(\Omega \times (0,T))^+,\ \text{such that}\ \ y(T) = \overline{y}(T)\ \text{in}\ \Omega \}.
\end{equation*}
We formulate the following result
\begin{theorem}
We suppose that $y_0 \neq \overline{y}_0$. Then $T_{\mathrm{min}}>0$.
\end{theorem}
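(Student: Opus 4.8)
The plan is to argue by contradiction and exploit the strict sign condition $\overline{v}\geq\eta>0$ together with the comparison principle for quasilinear parabolic equations. Suppose $T_{\mathrm{min}}=0$. Then there exists a sequence $T_n\downarrow 0$ and nonnegative controls $v_n\in L^\infty(\Omega\times(0,T_n))^+$ with corresponding solutions $y_n$ of \ref{system1} satisfying $y_n(T_n)=\overline{y}(T_n)$ in $\Omega$. The idea is to compare $y_n$ with the target trajectory $\overline{y}$ on the shrinking interval $(0,T_n)$ and derive a contradiction with $y_0\neq\overline{y}_0$ as $n\to\infty$.

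First I would set $w:=y_n-\overline{y}$ and write down the equation it solves. Subtracting \ref{trajectory1} from \ref{system1} gives
\begin{equation*}
w_t-\nabla\cdot\bigl(a(y_n)\nabla w\bigr)-\nabla\cdot\bigl((a(y_n)-a(\overline{y}))\nabla\overline{y}\bigr)=(v_n-\overline{v})\varrho_\omega\ \ \text{in}\ \Omega\times(0,T_n),
\end{equation*}
with $w=0$ on $\Sigma$ and $w(0)=y_0-\overline{y}_0$. The key structural observation is that the source term $(v_n-\overline{v})\varrho_\omega$ has a definite sign on the subdomain $\omega_1$ where $\varrho_\omega=1$: since $v_n\geq 0$ and $\overline{v}\geq\eta$, we get $(v_n-\overline{v})\varrho_\omega\leq -\eta$ on $\omega_1$ for small $T_n$, but more usefully one can compare against a supersolution or subsolution built from $\overline{y}$ and the sign of $\overline{v}$. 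I would linearize the quasilinear operator along the known coefficient $a(y_n)$ (or $a(\overline{y})$), writing $a(y_n)-a(\overline{y})=b(x,t)\,w$ with $b$ bounded by $M_a$ via \ref{a_hypothesis}, so that $w$ solves a linear parabolic equation with bounded, measurable coefficients to which the maximum/comparison principle applies.

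The core argument is then a comparison estimate on the short interval. Because $\overline{v}\geq\eta>0$ produces a strictly positive forcing for $\overline{y}$ while $v_n$ is merely nonnegative, the control $v_n$ can push $y_n$ only in a constrained direction, and the classical parabolic comparison principle forces a one-sided bound on $w(T_n)$ that persists uniformly as $T_n\to 0$. Concretely, I would construct an explicit sub/supersolution (for instance involving $e^{-Kt}$ times a fixed sign function adapted to $\mathrm{sign}(y_0-\overline{y}_0)$ on a set of positive measure) and invoke the comparison principle to conclude that $\|w(T_n)\|$ cannot vanish: since $w(0)=y_0-\overline{y}_0\neq 0$ and the equation for $w$ has coefficients uniformly bounded on $(0,T_n)$, the solution depends continuously on time and $w(T_n)\to y_0-\overline{y}_0\neq 0$, contradicting $w(T_n)=0$. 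The positivity constraint is what prevents the control from creating the large final-time oscillations that would otherwise permit instantaneous controllability.

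The main obstacle will be making the comparison rigorous for the quasilinear operator with a genuinely nonnegative (as opposed to strictly signed) control, and in particular handling the coupling term $\nabla\cdot((a(y_n)-a(\overline{y}))\nabla\overline{y})$ which is not of definite sign. I expect the cleanest route is to freeze the diffusion coefficient and absorb this term into a zeroth- and first-order perturbation with coefficients controlled by $M_a$ and $\|\nabla\overline{y}\|_{L^\infty}$ (finite by the regularity of $\overline{y}$), reducing everything to a linear comparison principle; the uniform-in-$T_n$ bounds needed to pass to the limit then follow from the fact that these coefficient bounds do not depend on $T_n$. Establishing that the constructed barrier genuinely separates $y_n$ from $\overline{y}$ at time $T_n$, using essentially the strong maximum principle together with $\overline{v}-v_n\geq \eta-v_n$ having a sign on $\omega_1$, is the delicate point.
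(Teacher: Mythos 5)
Your central step does not hold: from $w(0)=y_0-\overline{y}_0\neq 0$ and ``continuous dependence on time'' you conclude $w(T_n)\to y_0-\overline{y}_0$, but this would require the source terms $(v_n-\overline{v})\varrho_\omega$ to be uniformly controlled as $T_n\to 0$, and the controls $v_n$ are \emph{not} uniformly bounded along such a sequence. On the contrary, in the unconstrained setting one can steer to $\overline{y}(T)$ in arbitrarily short time precisely by letting the control blow up as $T\to 0$; so any argument that uses only $\|v_n\|_{L^\infty}<\infty$ for each fixed $n$, plus boundedness of the coefficients, cannot yield $T_{\mathrm{min}}>0$. Relatedly, your sign claim $(v_n-\overline{v})\varrho_\omega\leq -\eta$ on $\omega_1$ is false: $v_n\geq 0$ is a \emph{lower} bound, so $v_n-\overline{v}$ can be arbitrarily large and positive. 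The constraint $v_n\geq 0$ can only be exploited in a one-sided way that is insensitive to how large $v_n$ is.

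This is how the paper proceeds, and it forces a case distinction that your sketch misses. When $y_0>\overline{y}_0$ on a set of positive measure, the comparison principle gives $y\geq z$, where $z$ is the free (null-control) solution --- valid no matter how large $v$ is --- and pairing with a nonnegative $\varphi$ concentrated where $y_0>\overline{y}_0$ yields $\big\langle y(T)-\overline{y}(T),\varphi\big\rangle>0$ for all small $T$; this is close in spirit to your barrier idea, but the barrier must be the free solution itself, not something obtained from uniform coefficient bounds. When instead $y_0<\overline{y}_0$ everywhere, the nonnegative control pushes $y$ in exactly the direction needed to reach $\overline{y}$, and no comparison or barrier argument of the kind you describe can rule out success. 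The paper handles this case by a duality identity for $\xi=y-z$ tested against an adjoint state whose final datum $\varphi^T=\zeta\phi_1$ is positive on $\omega$ and negative outside $\omega$: positivity of $\varphi$ on $\omega$ makes the control's contribution $\iint v\varrho_\omega\varphi\,dx\,dt\geq 0$ regardless of the size of $v$, while negativity outside forces $\big\langle\overline{\xi}(T),\varphi^T\big\rangle<0$ for small $T$, giving the contradiction. Your proposal contains no mechanism for this second case, and it is the genuinely delicate half of the theorem.
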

\begin{proof}
\textbf{Case 1.} If $y_0 \not < \overline{y}_0$.

By assumptions $y_0 > \overline{y}_0$ in a set of positive measure. Then, there exists a nonnegative $\varphi \in H_0^1(\Omega) \backslash  \{0\}$, such that
$$\int_\Omega  (y_0 - \overline{y}_0) \varphi\ dx >~0.$$
Let us denote $z$ as the solution to \ref{system1} with initial datum $y_0$ and null control. Since\ $z - \overline{y} \in C^0([0,T];H^{-1}({\Omega}))$ and $\Big\langle z(\cdot,0) - \overline{y}(\cdot,0),\varphi(\cdot) \Big\rangle > 0$, we conclude that
\begin{equation}\label{zT-condition}
\Big\langle z(\cdot,T) - \overline{y}(\cdot,T), \varphi(\cdot) \Big\rangle > 0,\ \forall\ T \in [0,T_0),
\end{equation}
with $T_0>0$ small enough.

We will show that\ $T_{\mathrm{min}} \geq T_0$. Indeed, let\ \ $T \in (0,T_0)$ and $v \in L^\infty(\Omega \times (0,T))$ be a nonnegative control such that \ref{system1} admits a solution $y$ with initial datum $y_0$ and control $v$. Then, by the comparison principle (see \cite{MURRAY}, Chapter 3, Section 7, Theorem 12), we have $y \geq z$. Joining this result with \ref{zT-condition}, we have
$$\Big\langle y(\cdot,T),\varphi(\cdot)  \Big\rangle \geq \Big\langle z(\cdot,T),\varphi(\cdot) \Big\rangle > \Big\langle \overline{y}(\cdot,T),\varphi(\cdot) \Big\rangle.$$
Hence $y(\cdot,T) \neq \overline{y}(\cdot,T)$.

\vspace*{4pt}\noindent\textbf{Case 2.} If $y_0 < \overline{y}_0$.

We take $z$ the unique solution to \ref{system1} with initial datum $y_0$ and null control. Then $\xi := y-z$ solves
\begin{equation}\label{xi-equ}
	\left\{
	\begin{array}{lllll}
		\xi_{t}- \Delta (\Phi(\xi + z) -  \Phi(z)) = v \varrho_{\omega} & \mbox{in} & Q, \\
		\xi(x,t)=0        & \mbox{on} & \Sigma,\\
		\xi(x,0)=0   & \mbox{in} & \Omega,
			\end{array}
	\right.	
\end{equation}
where $\displaystyle \Phi(r):= \int_0^r a(s) ds$.

Besides, $\overline{\xi}:=\overline{y}-z$ solves \ref{xi-equ} with initial datum $\overline{y}_0-y_0$ and control $\overline{v}$. The problem is reduced to prove  the existence of $T_0>0$ such that, for any $T~\in~(0,T_0)$ and for any nonnegative $v \in L^\infty(\omega_1 \times (0,T))$, we obtain $\xi(\cdot,T)~\neq~\overline{\xi}(\cdot,T)$. Clearly, this implies $y(\cdot,T) \neq \overline{y}(\cdot,T)$.

Suppose by contradiction, for any $T_0>0$ there exists $T \in (0,T_0)$ such that $\xi(\cdot,T) = \overline{\xi}(\cdot,T)$.

$\xi$ is characterized by the duality identity
\begin{equation}\label{duality}
\Big\langle \xi(\cdot,T),\varphi^T(\cdot) \Big\rangle = \iint_{\omega \times (0,T)} v \varrho_\omega \varphi\  dxdt,
\end{equation}
where $\varphi$ is the solution to the adjoint problem
\begin{equation}\label{varphi-equ}
	\left\{
	\begin{array}{lllll}
		-\varphi_{t}- \Psi_\xi(z) \Delta \varphi = 0 & \mbox{in} & Q, \\
		\varphi(x,t)=0        & \mbox{on} & \Sigma,\\
		\varphi(x,T)=\varphi^T(x)   & \mbox{in} & \Omega,
			\end{array}
	\right.	
\end{equation}
with $\Psi_\xi$ satisfying
$$\Psi_\xi(s)= \begin{cases}
						\dfrac{\Phi(\xi + s) - \Phi(\xi)}{s} 	&\text{if}\  \ s \neq 0,\\
						\Phi'(\xi) 	&\text{if}\ \ s=0.
					\end{cases}$$

Let $\phi_1$ be the first eigenfunction of the Dirichlet Laplacian in $\Omega$, which is strictly positive in $\Omega$ (see \cite{Evans}, Chapter 6, Section 5, Theorem 3). For any $r>0$, define the set
$$E_r := \{ x \in \Omega \backslash \omega\ |\ \mathrm{dist}(x,\partial \omega) < r \}.$$
We consider a constant $\theta>0$ such that
$$\int_{\Omega \backslash (\omega \cup E_d)} (-\phi_1)(\overline{y}_0 - y_0) dx \leq -\theta <0,$$
where $d:=\mathrm{dist}(\partial \omega, \partial \Omega)/2$, then we define
$$C_\theta:= \frac{\theta}{3 \|\phi_1\|_{L^\infty(\Omega)} \|\overline{y}_0 - y_0\|_{L^1(\Omega)}}>0.$$
Let us consider the cut-off function $\zeta \in C^\infty(\overline{\Omega})$ such that
\begin{equation*}
	\zeta(x) := \begin{cases}
				 -1		&\text{if}\ \ x \in \Omega \backslash (\omega \cup E_\delta),\\
				 -1 \leq \zeta(x) \leq C_\theta   &\text{if}\ \ x \in E_\delta,\\	
	 			 {C_\theta} &\text{if}\ \ x \in \omega,
			 \end{cases}
\end{equation*}
for some $\delta>0$.

We define $\varphi^T(x) := \zeta(x) \phi_1(x)$ (see Figure \ref{Fig3}), then  there exists a constant $\tilde{\theta}>0$ such that
\begin{equation*}
	\varphi^T(x) \geq \tilde{\theta},\  \forall  x \in \omega.
\end{equation*}

%%%%%%%%%%%%%%%%%%

\begin{figure}[htp]
	\begin{center}
	\includegraphics[width=3in]{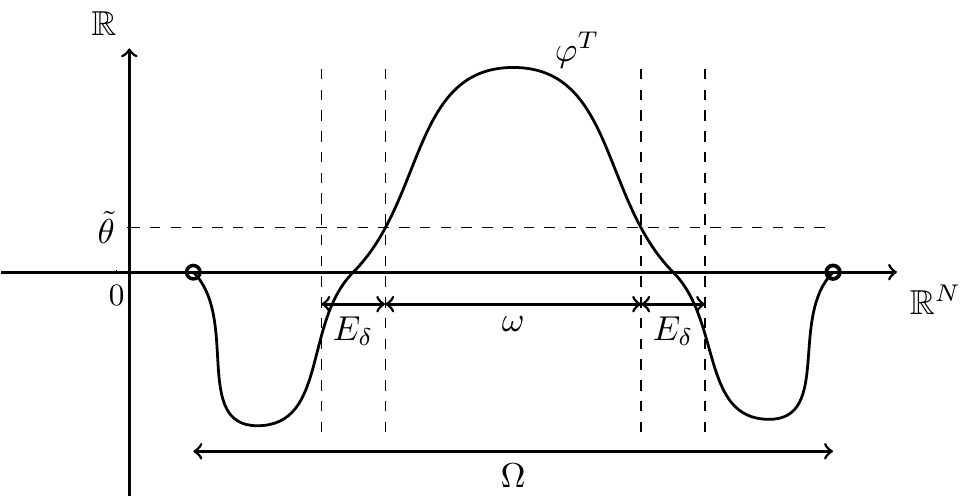}\\
\caption{Initial datum $\varphi^T$ for the adjoint system.}\label{Fig3}
 \end{center}
\end{figure}

%%%%%%%%%%%%%%%%%%
We will prove that $\Big\langle \overline{\xi}(\cdot,T),\varphi^T(\cdot) \Big\rangle < 0$. Indeed,
\begin{align*}
	\int_\Omega (\overline{y}_0 - y_0) \varphi^T dx =& \int_{\Omega \backslash (\omega \cup E_\delta)} (\overline{y}_0 - y_0) \varphi^T dx + \int_{E_\delta} (\overline{y}_0 - y_0) \varphi^T dx \\
	&+ \int_\omega (\overline{y}_0 - y_0) \varphi^T dx.
\end{align*}
For $\delta>0$ small enough, we get, on the one hand,
\begin{align*}
	\int_{\Omega \backslash (\omega \cup E_\delta)} (\overline{y}_0 - y_0) \varphi^T dx &= \int_{\Omega \backslash (\omega \cup E_\delta)} (\overline{y}_0 - y_0) (-\phi_1) dx\\
	&\leq \int_{\Omega \backslash (\omega \cup E_d)} (\overline{y}_0 - y_0)(- \phi_1) dx \leq -\theta.
\end{align*}
On the other hand,
\begin{equation*}
	\left|\int_{E_\delta} (\overline{y}_0 - y_0) \varphi^T dx \right| \leq C \|\overline{y_0} - y_0\|_{L^\infty(\Omega)} \|\phi_1\|_{L^\infty(\Omega)} |E_\delta| \leq \frac{\theta}{3}.
\end{equation*}
Finally
\begin{equation*}
	\left|\int_\omega (\overline{y}_0 - y_0) \varphi^T dx\right| = \left|\int_\omega (\overline{y}_0 - y_0) C_\theta \phi_1 dx \right| \leq C_\theta \|\phi_1\|_{L^\infty(\Omega)} \|\overline{y}_0 - y_0\|_{L^1(\Omega)} = \frac{\theta}{3}.
\end{equation*}
Then
\begin{equation}\label{ineq1}
	\int_\Omega (\overline{y}_0 - y_0)  \varphi^T dx  \leq -\frac{\theta}{3}< 0.
\end{equation}

By transposition results, $\overline{\xi} \in C^0([0,T];H^{-1}(\Omega))$ (see \cite{Lions2}, Chapter 3, Section 4.7). Hence, choosing $T_0 > 0$ small enough, from \ref{ineq1} we conclude
\begin{equation}\label{xiT-negat}
\Big\langle \overline{\xi}(\cdot,T), \varphi^T(\cdot) \Big\rangle < 0,\ \ \forall\ T \in (0,T_0].
\end{equation}

We will prove that $\varphi_- =0$ in $\omega \times (0,T_0)$. Indeed, we have that $\varphi^T \in W^{2,p}(\Omega) \cap W^{1,p}_0(\Omega)$ (see \cite{Bre}, Chapter 9, Section 9, Theorem 9.32), then by regularity results, we have that $\varphi \in C(\Omega \times [0,T_0])$ (see \cite{Lieb}, Chapter 7, Section 7, Theorem 7.32). So, since $\varphi^T \geq \tilde{\theta}>0$ in $\omega$, by continuity for $T_0>0$ small enough, we get $\varphi(x,t) \geq \tilde{\theta} >0$\ \ in $(x,t) \in \omega \times (0,T_0)$, thus
\begin{equation}\label{var-nonposit}
\varphi_- = 0,\ \ \text{in}\   \omega \times (0,T_0).
\end{equation}

Substituting \ref{xiT-negat} and \ref{var-nonposit} in \ref{duality}, for $T \in (0,T_0)$, we get
\begin{equation*}
0 > \Big\langle {\xi}(\cdot,T),\varphi^T \Big\rangle = \iint_{(0,T) \times \omega} v \varrho_\omega \varphi\ dxdt \geq 0.
\end{equation*}
This is a contradiction, hence $\xi(\cdot,T) \neq \overline{\xi}(\cdot,T)$.
\end{proof}

\begin{remark}
The result of this Section is independent of how we obtain the controllability under constraints for target trajectories, thus, if there exists $T_{\mathrm{min}}$ and the initial datum for systems \ref{system1} and \ref{trajectory1} are different, then $T_{\mathrm{min}}>0$.

\end{remark}

%%%%%%%%%%%%%%%
%%% NEW SECTION
%%%%%%%%%%%%%%%
\section{Additional comments and results}\label{Sec5}

\begin{itemize}
	\item It is clear that Theorems \ref{stst-control} and \ref{traject-control} are verified if we add a semilinear term in the system \ref{system1}, this is, if we consider the following system
	 \begin{equation}
 \label{eq7}
\left\{
\begin{array}{lllll}
y_{t} -  \nabla \cdot (a(y)\nabla y) + F(y) = v \varrho_{\omega} &\text{in} &Q,\\
y(x,t)  = 0 &\text{on} &\Sigma,\\
y(x,0) = y_{0}(x) &\text{in} &\Omega,
\end{array}
\right.
\end{equation}
where in this case $F(\cdot)$ satisfies same conditions as in \cite{PZ}.
	\item The spaces $C^{1/2,1/4}(\overline{Q})$ and $C^{2+1/2}(\overline{\Omega})$ in both Theorems \ref{stst-control} and \ref{traject-control} can be replaced by $C^{\theta,\theta/2}(\overline{Q})$ and $C^{2+\theta}(\overline{\Omega})$ for any $\theta \in (0,1)$, respectively. This is clear by the approach developed in this paper.
	\item The condition \ref{traject1-condition} is fundamental in this paper, we need this inequality to prove the control to trajectories and the stabilization property, in other words, the result of Theorem \ref{traject-control} depend on the existence of this condition. It would be very important to prove the same results without this condition, this question doesn't seem easy because we would need new especial estimates to control this problem.
	\item It is interesting to study the constraint controllability for more quasi-linear parabolic equations, for instance, when the nonlinearity $a(y)$ is replaced by the nonlinearity $a(\nabla y)$ in \ref{system1}, it seems that the techniques applied in this paper are not enough. We need more regularity for the coefficient of principal part to prove local controllability results and to prove comparison principle.
\end{itemize}

\begin{itemize}
	
\item \textbf{Controllability for the Coupled System}

If we consider the system
 \begin{equation}
 \label{eq11}
\left\{
\begin{array}{lllll}
y_{1,t} -  \nabla \cdot (a(y_1)\nabla y_1) + F_1(y_1,y_2) = v \varrho_{\omega} &\text{in} &Q,\\
y_{2,t} -  \nabla \cdot (a(y_2)\nabla y_2) + F_2(y_1,y_2) = 0 &\text{in}  &Q,\\
y_1(x,t) = y_2(x,t) = 0 &\text{on} &\Sigma,\\
y_1(x,0) = y_{1,0}(x),\ \ \ y_2(x,0) = y_{2,0}(x) &\text{in} &\Omega,
\end{array}
\right.
\end{equation}
with the target trajectory
\begin{equation}
 \label{eq10}
\left\{
\begin{array}{lllll}
\overline{y}_{1,t} -  \nabla \cdot (a(\overline{y}_1)\nabla \overline{y}_1) + F_1(\overline{y}_1,\overline{y}_2) = \overline{v} \varrho_{\omega} &\text{in} &Q,\\
\overline{y}_{2,t} -  \nabla \cdot (a(\overline{y}_2)\nabla \overline{y}_2) + F_2(\overline{y}_1,\overline{y}_2) = 0 &\text{in} &Q,\\
\overline{y}_1(x,t) = 0,\ \  \overline{y}_2(x,t) = 0&\text{on} &\Sigma,\\
\overline{y}_1(x,0) = \overline{y}_{1,0}(x),\ \  \overline{y}_2(x,0) = \overline{y}_{2,0}(x) &\text{in} &\Omega.
\end{array}
\right.
\end{equation}
We want to find a control $v \in L^\infty(\Omega \times (0,T))$, such that the solution $(y_1,y_2)$ of  \ref{eq11} satisfy $(y_1(\cdot, T), y_2(\cdot, T)) = (\overline{y}_1(\cdot, T),\overline{y}_2(\cdot,T))$ in $\Omega$ and $v \geq 0$\ \ if\ \ $\overline{v}\geq C >0$.

This problem is an open question, the difficulty to apply the comparison principle is fundamental in the method of solution. It will be necessary to implement other tools.

%\newpage

\item \textbf{Degenerate Null Controllability}

In dimension $N=1$, if we consider the system
\begin{equation}
	\label{eq9}
\left\{
\begin{array}{lllll}
y_t - (a(y) x^{\alpha}  y_x)_x + F(t,x,y) = v \varrho_{\omega} &\text{in} &Q,\\
y(1,t) = 0\ \ \text{and} \begin{cases}
y(0,t) = 0\ \ &\text{for}\ 0 \leq \alpha <1,\\
(x^{\alpha} y_x)(0,t) = 0\ \ &\text{for}\ 1\leq \alpha <2,
\end{cases} &\text{on} &\Sigma,\\
y(x,0) = y_{0}(x) &\text{in} &\Omega.
\end{array}
\right.
\end{equation}
We want to find a control $v \in L^2(\omega \times (0,T))$, such that the solution $y$ of \ref{eq9} satisfies\ $y(\cdot,T) = 0$ in $\Omega$.

The linear system will be
\begin{equation}
	\label{eq8}
\left\{
\begin{array}{lllll}
&y_t - (a(0) x^{\alpha}  y_x)_x + c(t,x)y = v \varrho_{\omega} &\text{in} &Q,\\
&y(1,t) = 0\ \ \text{and} \begin{cases}
y(0,t) = 0\ \ &\text{for}\ 0 \leq \alpha <1,\\
(x^{\alpha} y_x)(0,t) = 0\ \ &\text{for}\ 1\leq \alpha <2,
\end{cases} &\text{on} &\Sigma,\\
&y(x,0) = y_{0}(x) &\text{in} &\Omega.
\end{array}
\right.
\end{equation}
In suitable conditions for the function $a(\cdot)$ it is possible to prove the linear problem (for instance see \cite{DNC}). But the non-linear problem is a difficult result to obtain as more tools are needed to understand the spaces that must be used.

\end{itemize}

%\begin{appendices}
\section{Appendix}\label{Sec6}

\subsection{Local controllability result}
We will prove a local controllability result, thus:
\begin{lemma}\label{lem1}
Given $T>0$ and $R>0$. Then, there exist positive constants $C$ and $\delta$ depending on $R$ and $T$ such that, for all targets $\overline{y} \in C^{2+1/2,1+1/4}(\overline{Q})$ solution to \ref{trajectory1} with initial datum $\overline{y}_0$ and control $\overline{v}$ and for each initial datum $y_0 \in C^{2+1/2}(\overline{\Omega})$, such that if
	\begin{equation}\label{localcondition}
		\|y_0\|_{C^{2+1/2}(\overline{\Omega})} \leq R,\ \ \|\overline{y}\|_{C^{2+1/2,1+1/4}(\overline{Q})} \leq R\ \ \text{and}\ \  \|y_0 - \overline{y}_0\|_{C^{2+1/2}(\overline{\Omega})} \leq \delta,
	\end{equation}
	we can find a control $v \in C^{1/2,1/4}(\overline{Q})$, such that the corresponding solution $y$ of \ref{system1} satisfies
		$$y(T) = \overline{y}(T).$$
	Moreover,
		$$\|v - \overline{v}\|_{C^{1/2,1/4}(\overline{Q})} \leq C \|y_0 - \overline{y}_0\|_{C^{2+1/2}(\overline{\Omega})}.$$			
\end{lemma}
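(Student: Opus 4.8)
The plan is to prove Lemma \ref{lem1} by linearizing the quasilinear equation around the target trajectory, establishing null controllability for the linearized problem in the H\"older scale, and closing the argument with a fixed-point scheme; everything is arranged so that the control lands in $C^{1/2,1/4}(\overline{Q})$ with the stated linear estimate. First I would pass to the Kirchhoff form already used in Section \ref{Sec4}: with $\Phi(r):=\int_0^r a(s)\,ds$, both \ref{system1} and \ref{trajectory1} read $y_t-\Delta\Phi(y)=v\varrho_\omega$ and $\overline{y}_t-\Delta\Phi(\overline{y})=\overline{v}\varrho_\omega$. Setting $z:=y-\overline{y}$ and $w:=v-\overline{v}$, the difference satisfies
$$z_t-\Delta\big(\Phi(\overline{y}+z)-\Phi(\overline{y})\big)=w\varrho_\omega,\qquad z(\cdot,0)=y_0-\overline{y}_0,$$
so the lemma reduces to steering $z$ to $0$ at time $T$ by a control $w\in C^{1/2,1/4}(\overline{Q})$ obeying $\|w\|_{C^{1/2,1/4}}\le C\,\|y_0-\overline{y}_0\|_{C^{2+1/2}}$, the smallness $\|z(\cdot,0)\|_{C^{2+1/2}}\le\delta$ coming from \ref{localcondition}.

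Writing $\Phi(\overline{y}+z)-\Phi(\overline{y})=\Psi\,z$ with the positive secant coefficient $\Psi=\Psi(x,t;z):=\int_0^1 a(\overline{y}+sz)\,ds\ge a_0$ (exactly as the function $\Psi_\xi$ appearing in Section \ref{Sec4}) exhibits the nonlinear term as linear in $z$ with a coefficient depending on $z$. For the fixed point I freeze a function $\hat z$ in a small ball of $C^{2+1/2,1+1/4}(\overline{Q})$ and consider the linear problem $z_t-\Delta(\Psi(\cdot;\hat z)\,z)=w\varrho_\omega$, $z(\cdot,0)=z_0$, $z(\cdot,T)=0$. The change of unknown $u:=\Psi(\cdot;\hat z)\,z$ turns this into a uniformly parabolic equation in non-divergence form,
$$u_t-\Psi(\cdot;\hat z)\,\Delta u-\frac{\Psi_t(\cdot;\hat z)}{\Psi(\cdot;\hat z)}\,u=\Psi(\cdot;\hat z)\,\varrho_\omega\,w,$$
whose coefficients belong to $C^{1/2,1/4}(\overline{Q})$ with norms bounded uniformly over the ball, because $a\in C^2$ by \ref{a_hypothesis} and $\overline{y},\hat z\in C^{2+1/2,1+1/4}$ by \ref{localcondition}. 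This is precisely the framework of parabolic Schauder theory (\cite{LADY}).

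For this linear equation I would first obtain null controllability with an $L^2$ control supported in $\omega$ from a global Carleman estimate for the adjoint system (\cite{FI}). The delicate point is regularity: I would build the control by the Fursikov--Imanuvilov duality method with weights that vanish to high order as $t\to T$, so that the controlled state and the control inherit weighted bounds that are smooth near $t=T$; a parabolic-regularity bootstrap (interior and up to the lateral boundary, via \cite{LADY}) then upgrades the $L^2$ control to $w\in C^{1/2,1/4}(\overline{Q})$ and the state to $z\in C^{2+1/2,1+1/4}(\overline{Q})$, with a quantitative bound $\|w\|_{C^{1/2,1/4}}+\|z\|_{C^{2+1/2,1+1/4}}\le C(R,T)\,\|z_0\|_{C^{2+1/2}}$ whose constant is uniform in $\hat z$.

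This defines a map $\mathcal F:\hat z\mapsto z$. The linear estimate makes $\mathcal F$ send the ball of radius $\rho\sim\|z_0\|_{C^{2+1/2}}$ into itself once $\delta$ is small, and the compact embedding of $C^{2+1/2,1+1/4}$ into a slightly weaker H\"older space, together with the continuous dependence of $\Psi(\cdot;\hat z)$ on $\hat z$, gives compactness and continuity of $\mathcal F$; Schauder's fixed-point theorem then yields $z=\mathcal F(z)$, i.e.\ a solution of the nonlinear difference equation with $z(\cdot,T)=0$. Undoing the substitutions gives $v=w+\overline{v}\in C^{1/2,1/4}(\overline{Q})$ with $y(\cdot,T)=\overline{y}(\cdot,T)$ and the required estimate. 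I expect the main obstacle to be the third step: producing a control in the H\"older class $C^{1/2,1/4}$ with a linear estimate that is \emph{uniform} over the frozen coefficients $\Psi(\cdot;\hat z)$, since the Carleman construction naturally yields only $L^2$ controls and the regularity bootstrap must be compatible both with the degeneration of the weights at $t=T$ and with the closure of the fixed point in H\"older norms.
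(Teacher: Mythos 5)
Your proposal is correct in outline and follows the same overall strategy as the paper --- freeze the state-dependent coefficient around the target, prove null controllability of the resulting uniformly parabolic linear system via a Carleman/observability inequality, upgrade the control to $C^{1/2,1/4}(\overline{Q})$ by parabolic Schauder regularity, and close with a compactness fixed point --- but the technical route differs in two places. First, the decomposition: you pass to the Kirchhoff form and write the nonlinearity as $\Delta(\Psi z)$ with the secant coefficient $\Psi=\int_0^1 a(\overline{y}+s\hat z)\,ds\ge a_0$, then change unknowns $u=\Psi z$ to reach a non-divergence equation with a single principal coefficient; the paper instead keeps the divergence form $z_t-\nabla\cdot(\alpha_w\nabla z)+\nabla\cdot(\beta_w\nabla\overline{y}\,z)=u\varrho_\omega$ with $\alpha_w=a(w+\overline{y})$ and the difference-quotient coefficient $\beta_w$, so as to quote the Carleman estimate and the H\"older null-controllability result of Liu--Zhang \cite{XU} essentially verbatim. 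The two linearizations are equivalent; yours is slightly cleaner (one coefficient instead of two) at the price of the extra zero-order term $\Psi_t/\Psi$ and of the control entering as $\Psi\varrho_\omega w$, which you must divide back out (harmless since $\Psi\ge a_0$ and $\Psi\in C^{1/2,1/4}$). Second, the fixed point: the map $\hat z\mapsto z$ you call $\mathcal F$ is genuinely multivalued, since the null control of the frozen linear system is not unique; the paper uses Kakutani's theorem for a set-valued map with closed graph precisely to avoid choosing a selection, whereas with Schauder you would additionally have to fix a canonical selection (e.g.\ the minimal-norm control) and verify its continuity in $\hat z$ --- a fixable but nontrivial extra step. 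Finally, you correctly identify the real crux, namely the uniform $C^{1/2,1/4}$ bound on the control for the frozen linear problem; the paper does not prove it either but outsources it to Proposition 4.1 of \cite{XU}, so your proposal is at the same level of completeness on that point.
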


\begin{proof}
	We will use arguments of \cite{BEC} and \cite{XU} because the linearized problems are very similar.

Taking $z=y-\overline{y}$\ and\ $u=v-\overline{v}$, the problem is reduced to prove the local null controllability of the system
\begin{equation}\label{nonlinearsystem}
	\left\{
	\begin{array}{lllll}
		z_{t}- \nabla \cdot (\alpha_z(x,t) \nabla z) + \nabla \cdot (\beta_z(x,t) \nabla \overline{y}\ z) = u \varrho_{\omega} & \mbox{in} & Q, \\
		z(x,t)=0        & \mbox{on} & \Sigma,\\
		z(x,0)=y_{0}(x) - \overline{y}_0(x)   & \mbox{in} & \Omega,
			\end{array}
	\right.	
\end{equation}
where $$\alpha_{z}(x,t):= a(z(x,t)+\overline{y}(x,t))$$
and
$$\beta_{z}(x,t):= \begin{cases}
				- \dfrac{a(z(x,t)+\overline{y}(x,t))-a(\overline{y}(x,t))}{z(x,t)}&, \  \text{if}\ \ z(x,t)\neq 0, \\
				-a'(\overline{y}(x,t))&, \  \text{if}\ \ z(x,t)=0.
				\end{cases}$$
		
Let us fix $w \in Z:= \{ w \in  C^{1+1/2,1+1/4}(\overline{Q});\ w(\cdot,0)=z_0(\cdot) \}$ and we consider the linear system
\begin{equation}\label{linearsystem}
	\left\{
	\begin{array}{lllll}
		z_{t}- \nabla \cdot (\alpha_w(x,t) \nabla z) + \nabla \cdot (\beta_w(x,t) \nabla \overline{y}\ z) = u \varrho_{\omega} & \mbox{in} & Q, \\
		z(x,t)=0        & \mbox{on} & \Sigma,\\
		z(x,0)=z_{0}(x):=y_{0}(x) - \overline{y}_0(x)   & \mbox{in} & \Omega,
			\end{array}
	\right.	
\end{equation}
where
\begin{equation}
\alpha_w(x,t) \in C^{1,1}(\overline{Q})
\end{equation}
and
\begin{equation}
\beta_w(x,t) \nabla \overline{y} \in C(\overline{Q}).
\end{equation}

Let us consider of adjoint system associated to the linear system \ref{linearsystem}
\begin{equation}\label{adjointsystem}
	\left\{
	\begin{array}{lllll}
		-p_{t}- \nabla \cdot (\alpha_w(x,t) \nabla p) + \nabla \cdot (\beta_w(x,t) \nabla \overline{y}\ p) = 0 & \mbox{in} & Q, \\
		p(x,t)=0        & \mbox{on} & \Sigma,\\
		p(x,T)=p_{T}(x)   & \mbox{in} & \Omega.
			\end{array}
	\right.	
\end{equation}

We denote
\begin{equation}
\label{constantB}
B:=\left(1+ \|w\|_{C^{1,1}(\overline{Q})}^2 + \|\nabla \overline{y}\|_{L^{\infty}({Q})}^2 \right).
\end{equation}

Given $\omega_0$ an open and nonempty subset of $\omega$ such that $\overline{\omega}_0 \subset \omega$, there exists a function $\alpha_0 \in C^{2}(\overline{\Omega})$ such that
$$\alpha_0(x)>0\ \text{in}\ \Omega;\ \alpha_0(x)=0\ \text{on}\ \partial \Omega; |\nabla \alpha_0(x)| >0\ \text{in} \ \overline{\Omega \backslash \omega_0}.$$
For any $\lambda>0$, we define
$$\phi(x,t):=\frac{e^{\lambda \alpha_0(x)}}{t(T-t)},\ \ \ \alpha(x,t):=\frac{e^{\lambda \alpha_0(x)}-e^{2\lambda |\alpha_0|_{C(\overline{\Omega})}}}{t(T-t)}.$$

Proceeding as in \cite{XU}, we can prove the following observability inequality
\begin{equation}\label{obs-ineq}
	\|p(0)\|_{L^2(\Omega)}^2 \leq C e^{e^{CB}} \iint_{\omega_1 \times (0,T)} e^{2s \alpha} \phi^3 |p|^2 dxdt,
\end{equation}
where $C>0$ is a constant, $B$ is the constant given in \ref{constantB}, $\lambda \geq CB$ and $s \geq C e^{CB}$.

Similar to Proposition 4.1 in \cite{XU}, we can prove the null controllability of the linear system \ref{linearsystem}, thus, there exist a control $v \in C^{1/2,1/4}(\overline{Q})$ such that the associated solution $z$ of \ref{linearsystem} satisfies $z(T)=0$ in $\Omega$.
The estimate for the control is
\begin{equation}\label{estimate-control}
\|v-\overline{v}\|_{C^{1/2,1/4}(\overline{Q})} \leq C e^{e^{CB}} \|z_0\|_{L^2(\Omega)}.
\end{equation}

Finally, using arguments of Fixed-Point (specifically  Kakutani Fixed-Point Theorem), we can conclude the proof of Lemma~\ref{lem1}. Indeed, we define the set
$$K:=\{z \in C^{2+1/2,1+1/4}(\overline{Q}); \|z\|_{C^{2+1/2,1+1/4}(\overline{Q})} \leq 1,\ \ z(\cdot,0)=z_0(\cdot)\},$$
and a (possible multivalued) map $\Lambda: Z \rightarrow 2^{Z}$ such that for any $w \in Z$, put
\begin{align*}
&\Lambda(w):=\{ z \in Z;\ \exists\ v \in C^{1/2,1/4}(\overline{Q})\ \ \text{and a constant}\ C>0\ \ \text{such that}\ (z,v)\\
&\phantom{\Lambda(w):=\{ y \in Z; \exists\ v}  \text{satisfy}\ \ref{linearsystem}, \ref{estimate-control}\ \ \text{and}\  \ z(T)=0\ \  \text{in}\ \Omega\}.
\end{align*}
The map $\Lambda$ is well defined. Indeed, the null controllability of the linear system \ref{linearsystem} guaranteed this.

If initial datum $z_0$ is small enough, then $\Lambda(K) \subset K$. Indeed, by Schauder theory of linear parabolic systems we have the estimates for the state $z$:
$$\|z\|_{C^{2+1/2,1+1/4}(\overline{Q})}\leq C(B)(\|z_0\|_{C^{2+1/2}(\overline{\Omega})} + \|v-\overline{v}\|_{C^{1/2,1/4}(\overline{Q})}).$$
Taking $z_0$ small enough, we get\ \ $\|z\|_{C^{2+1/2,1+1/4}(\overline{Q})}\leq 1$.

Furthermore, it is clear that $K$ is a nonempty convex and compact subset of $Z$, this is an immediate result because we know that $K$ is compactly embedded in $Z$.

Also $\Lambda$ has a closed graph in $Z$.

Therefore, if $z_0$ is small enough by Kakutani Fixed-Point Theorem (see \cite{EZ}), $\Lambda$ possesses at least one fixed point $z$. This means that for quasilinear \ref{nonlinearsystem}, there exist a control $v \in C^{1/2,1/4}(\overline{Q})$ such that the solution $z$ satisfy $z(T)=0$ in $\Omega$. The cost of the control function $v(x,t)$ verifies \ref{estimate-control}.

This completes the proof of Lemma \ref{lem1}.
\end{proof}

%\end{appendices}

%%%%%%%%%%%%%%%%%%%%%%%%%%%%
%%%%%%%%%%%%%%%%%%%%%%%%%
%%%%%%%%%%%%%%%%

%The title of your first subsection in section 2

%For acknowledgements section, please don't number the section, please begin it with \section*{Acknowledgements}
\section*{Acknowledgments} First, mention to my colleagues Dario Pighin and Borjan Geshkovski from DeustoTech - University of Deusto and Dany Nina from Universidade Federal Fluminense for their collaboration in some suggestions to this article.

Special mention to PhD. Enrique Zuazua, for his advice and support in my stay of the year 2018 in DyCon ERC Advanced Grant Project, DeustoTech - Deusto Foundation, Bilbao, Spain.

% You may incorporate your references as follows in your main tex file.
% Using BibTex is not recommended but can be handled.

\medskip
% The data information below will be filled by AIMS editorial staff
Received  October 2019; 1st revision May 2020; final revision March 2021.
\medskip

\end{document}